\documentclass[10pt]{extarticle}

\usepackage[english]{babel}
\usepackage{graphicx}
\usepackage{framed}
\usepackage[normalem]{ulem}
\usepackage{indentfirst,ragged2e}
\usepackage{amsmath,amsthm,amssymb,amsfonts,wasysym,verbatim,bbm}
\usepackage[italicdiff]{physics}
\usepackage[T1]{fontenc}
\usepackage{lmodern,mathrsfs}
\usepackage{mathdots} 
\usepackage{wrapfig}
\usepackage[inline,shortlabels]{enumitem}
\setlist{topsep=2pt,itemsep=2pt,parsep=0pt,partopsep=0pt}
\usepackage[table,dvipsnames]{xcolor}
\usepackage[utf8]{inputenc}
\usepackage[a4paper,top=0.7in,bottom=0.7in,left=0.5in,right=0.5in]{geometry}
\usepackage{multicol}
\usepackage[most]{tcolorbox}
\usepackage{nicematrix} 
\usepackage{tikz,tikz-3dplot,tikz-cd,tkz-tab,tkz-euclide,tikzsymbols,pgf,pgfplots}
\pgfplotsset{compat=1.3}
\usepgfplotslibrary{fillbetween}
\usepackage{subfiles}
\graphicspath{{images/}{../images/}} 
\usepackage[backend=bibtex,style=numeric]{biblatex}
\usepackage{hyperref}
\usepackage[nameinlink]{cleveref}

\newcommand{\hide}[1]{} 

\usepackage[showisoZ=false]{datetime2}

\DTMnewdatestyle{newdate}{

}

\newcommand*{\utcpm}[1]{
\ifboolexpe{test{\ifnumequal{#1}{0}}}{}{\ifnum#1<0\else+\fi{#1}}}
\DTMnewzonestyle{newzone}{
    
}
\AtBeginDocument{
    \DTMsetdatestyle{newdate}
    \DTMsetzonestyle{newzone}
}

\usepackage{fancyhdr}
\fancypagestyle{plain}{
\fancyhf{}

\fancyfoot[C]{\sffamily\thepage}}

\newcommand{\N}{\mathbb{N}}
\newcommand{\Q}{\mathbb{Q}}
\newcommand{\R}{\mathbb{R}}

\newcommand{\Ls}{\mathcal{L}}

\newcommand{\Bf}{\mathfrak{B}}
\newcommand{\Mf}{\mathfrak{M}}

\newcommand{\limn}{\lim_{n\to\infty}}

\newcommand{\sumk}[1][1]{\sum_{k=#1}^\infty}

\newcommand{\exc}{\backslash}
\newcommand{\sub}{\subseteq}
\newcommand{\sups}{\supseteq}

\newcommand{\cupp}{\bigcup}

\newcommand{\cuppk}[1][1]{\bigcup_{k=#1}^\infty}

\newcommand{\cappn}[1][1]{\bigcap_{n=#1}^\infty}
\newcommand{\cuppn}[1][1]{\bigcup_{n=#1}^\infty}

\newcommand{\dx}{\,dx}

\newcommand*{\term}[1]{\textcolor{red!20!black}{\textbf{#1}}}
\newcommand*{\setbuild}[2]{\left\{#1\,\middle|\,#2\right\}}

\newtheoremstyle{mystyle}{}{}{}{}{\sffamily\bfseries}{.}{ }{}
\newtheoremstyle{cstyle}{}{}{}{}{\sffamily\bfseries}{.}{ }{\thmnote{#3}}
\makeatletter
\renewenvironment{proof}[1][\proofname] {\par\pushQED{\qed}{\normalfont\sffamily\bfseries\topsep6\p@\@plus6\p@\relax #1\@addpunct{.} }}{\popQED\endtrivlist\@endpefalse}
\makeatother
\theoremstyle{mystyle}{\newtheorem{definition}{Definition}[section]}
\theoremstyle{mystyle}{\newtheorem{proposition}[definition]{Proposition}}
\theoremstyle{mystyle}{\newtheorem{theorem}[definition]{Theorem}}
\theoremstyle{mystyle}{\newtheorem{lemma}[definition]{Lemma}}
\theoremstyle{mystyle}{\newtheorem{corollary}[definition]{Corollary}}
\theoremstyle{mystyle}{\newtheorem*{remark}{Remark}}
\theoremstyle{mystyle}{\newtheorem*{remarks}{Remarks}}
\theoremstyle{mystyle}{}
\theoremstyle{mystyle}{}
\theoremstyle{definition}{}
\theoremstyle{cstyle}{}
\newtheoremstyle{warn}{}{}{}{}{\normalfont}{}{ }{}
\theoremstyle{warn}

\newcommand{\warningsign}[1]{\tikz[scale=#1,every node/.style={transform shape}]{
\draw[-,line width={#1*0.8mm},red,fill=yellow,rounded corners={#1*2.5mm}] (0,0)--(1,{-sqrt(3)})--(-1,{-sqrt(3)})--cycle;
\node at (0,-1) {\fontsize{48}{60}\selectfont\bfseries!};}}

\makeatletter
\def\ifemptyarg#1{%
  \if\relax\detokenize{#1}\relax 
    \expandafter\@firstoftwo
  \else
    \expandafter\@secondoftwo
  \fi}
\makeatother

\tcolorboxenvironment{definition}{boxrule=0pt,boxsep=0pt,colback={red!10},left=8pt,right=8pt,enhanced jigsaw, borderline west={2pt}{0pt}{red},sharp corners,before skip=10pt,after skip=10pt,breakable}
\tcolorboxenvironment{proposition}{boxrule=0pt,boxsep=0pt,colback={Orange!10},left=8pt,right=8pt,enhanced jigsaw, borderline west={2pt}{0pt}{Orange},sharp corners,before skip=10pt,after skip=10pt,breakable}
\tcolorboxenvironment{theorem}{boxrule=0pt,boxsep=0pt,colback={blue!10},left=8pt,right=8pt,enhanced jigsaw, borderline west={2pt}{0pt}{blue},sharp corners,before skip=10pt,after skip=10pt,breakable}
\tcolorboxenvironment{lemma}{boxrule=0pt,boxsep=0pt,colback={Cyan!10},left=8pt,right=8pt,enhanced jigsaw, borderline west={2pt}{0pt}{Cyan},sharp corners,before skip=10pt,after skip=10pt,breakable}
\tcolorboxenvironment{corollary}{boxrule=0pt,boxsep=0pt,colback={violet!10},left=8pt,right=8pt,enhanced jigsaw, borderline west={2pt}{0pt}{violet},sharp corners,before skip=10pt,after skip=10pt,breakable}
\tcolorboxenvironment{proof}{boxrule=0pt,boxsep=0pt,blanker,borderline west={2pt}{0pt}{CadetBlue!80!white},left=8pt,right=8pt,sharp corners,before skip=10pt,after skip=10pt,breakable}
\tcolorboxenvironment{remark}{boxrule=0pt,boxsep=0pt,blanker,borderline west={2pt}{0pt}{Green},left=8pt,right=8pt,before skip=10pt,after skip=10pt,breakable}
\tcolorboxenvironment{remarks}{boxrule=0pt,boxsep=0pt,blanker,borderline west={2pt}{0pt}{Green},left=8pt,right=8pt,before skip=10pt,after skip=10pt,breakable}
\tcolorboxenvironment{example}{boxrule=0pt,boxsep=0pt,blanker,borderline west={2pt}{0pt}{Black},left=8pt,right=8pt,sharp corners,before skip=10pt,after skip=10pt,breakable}
\tcolorboxenvironment{examples}{boxrule=0pt,boxsep=0pt,blanker,borderline west={2pt}{0pt}{Black},left=8pt,right=8pt,sharp corners,before skip=10pt,after skip=10pt,breakable}
\tcolorboxenvironment{cthm}{boxrule=0pt,boxsep=0pt,colback={gray!10},left=8pt,right=8pt,enhanced jigsaw, borderline west={2pt}{0pt}{gray},sharp corners,before skip=10pt,after skip=10pt,breakable}

\AddToHook{env/proposition/begin}{\crefalias{definition}{proposition}}
\AddToHook{env/lemma/begin}{\crefalias{definition}{lemma}}
\AddToHook{env/theorem/begin}{\crefalias{definition}{theorem}}
\AddToHook{env/corollary/begin}{\crefalias{definition}{corollary}}

\newenvironment{talign*}{\csname align*\endcsname}{\endalign}

\usepackage[explicit]{titlesec}
\titleformat{\section}{\normalfont\sffamily\Large\bfseries}{\thesection}{12pt}{#1}
\titleformat{\subsection}{\normalfont\sffamily\Large\bfseries}{\thesubsection}{12pt}{#1}
\titleformat{\subsubsection}{\normalfont\sffamily\large\bfseries}{\thesubsection}{8pt}{#1}

\titlespacing*{\section}{0pt}{5pt}{5pt}
\titlespacing*{\subsection}{0pt}{5pt}{5pt}
\titlespacing*{\subsubsection}{0pt}{5pt}{5pt}

\DeclareMathAlphabet\mathbfcal{OMS}{cmsy}{b}{n}
\makeatletter
\g@addto@macro\normalsize{
\setlength\abovedisplayskip{3pt}
\setlength\belowdisplayskip{3pt}
\setlength\abovedisplayshortskip{0pt}
\setlength\belowdisplayshortskip{0pt}}
\makeatother

\makeatletter
\def\secondauthor#1{\gdef\@secondauthor{#1}}
\renewcommand\maketitle{
    \begin{center}\sffamily
        {\huge\bfseries\@title}\\
            \vspace{6mm}
        {\Large\@author}\\
            \vspace{6mm}
        {\large\@date}
    \end{center}
}
\makeatother

\title{Existence of Lebesgue Measurable Functions Outside the Mauldin Hierarchy}
\author{Senan Sekhon}
\date{\DTMToday}

\begin{document}

\thispagestyle{plain}

\maketitle

\vspace{5pt}

\begin{abstract}
    In 1916, Hausdorff proved that the Baire hierarchy on $\R$, starting with the continuous functions, generates all Borel functions on $\R$. It remained open whether, starting with the a.e. continuous functions, the corresponding hierarchy generates all Lebesgue measurable functions on $\R$. We prove that, assuming the Axiom of Choice, the answer is negative.
\end{abstract}

\vspace{5pt}

\tableofcontents

\vspace{5pt}

\section{Preliminary Definitions}

\begin{itemize}
    \item A \term{$G_\delta$ set} is a countable intersection of open sets, i.e. a set of the form $\cappn U_n$ where each $U_n$ is open. We may assume without loss of generality that this is a countable \emph{decreasing} intersection, i.e. $U_n\sups U_{n+1}$ for all $n\in\N$.
    \item An \term{$F_\sigma$ set} is a countable union of closed sets, i.e. a set of the form $\cuppn F_n$ where each $F_n$ is closed. We may assume without loss of generality that this is a countable \emph{increasing} union, i.e. $F_n\sub F_{n+1}$ for all $n\in\N$.
\end{itemize}

\begin{proposition}\label{set_of_discontinuities_is_F_sigma}
    The set of discontinuities of any function $f:\R\to\R$ is an $F_\sigma$ set.
\end{proposition}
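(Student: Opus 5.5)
The plan is to use the oscillation function. For $f:\R\to\R$ and $x\in\R$, define
\[
\omega_f(x)=\inf_{\delta>0}\,\sup\setbuild{|f(y)-f(z)|}{y,z\in(x-\delta,x+\delta)},
\]
which takes values in $[0,\infty]$. The first step is to check that $f$ is continuous at $x$ if and only if $\omega_f(x)=0$. If $f$ is continuous at $x$, then for each $\epsilon>0$ there is a $\delta$ with $|f(y)-f(x)|<\epsilon/2$ on $(x-\delta,x+\delta)$. The triangle inequality then gives $|f(y)-f(z)|<\epsilon$ there, so $\omega_f(x)\le\epsilon$. Conversely, if $\omega_f(x)=0$, then for each $\epsilon$ some interval around $x$ has $\sup|f(y)-f(z)|<\epsilon$; taking $z=x$ yields continuity. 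Consequently the set of discontinuities is
\[
D_f=\setbuild{x}{\omega_f(x)>0}=\cuppn \setbuild{x}{\omega_f(x)\ge 1/n}.
\]

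The second step, which is the only real content, is to show that each $F_n=\setbuild{x}{\omega_f(x)\ge 1/n}$ is closed. I will show that its complement is open. Suppose $\omega_f(x)<1/n$. Then there is $\delta>0$ with $\sup\setbuild{|f(y)-f(z)|}{y,z\in(x-\delta,x+\delta)}<1/n$. Take any $x'\in(x-\delta,x+\delta)$ and choose $\delta'>0$ so small that $(x'-\delta',x'+\delta')\sub(x-\delta,x+\delta)$. The supremum over this smaller interval is at most the previous one, so $\omega_f(x')<1/n$. Hence the whole interval $(x-\delta,x+\delta)$ lies in the complement of $F_n$, which is therefore open.

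Combining the two steps, $D_f$ is a countable union of closed sets, i.e.\ an $F_\sigma$ set. Moreover the $F_n$ are increasing in $n$, consistent with the convention in the preliminary definitions. The main obstacle, modest as it is, is the openness argument: one must use open intervals nested inside the original one, and must allow $\omega_f=\infty$ without this causing any issue. Both points are handled by working directly with the strict inequality $<1/n$ and the monotonicity of the supremum under shrinking intervals.
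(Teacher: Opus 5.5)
Your proof is correct: the oscillation criterion for continuity holds, each set $\{x : \omega_f(x)\ge 1/n\}$ is closed by your open-interval argument (which also handles $\omega_f=\infty$), and $D_f$ is their countable union. The paper does not prove this itself but defers to Oxtoby (Theorem 7.1) and Thomson, whose standard proof is this same oscillation argument, so your approach matches the intended one.
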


See \cite[Theorem 7.1, page 31]{oxtoby} or \cite[Theorem 6.28, pages 382--384]{thomson} for a proof.

\begin{definition}
    A set $A\sub\R$ has the \term{Baire property} if there is an open set $U\sub\R$ and a meager set $M\sub\R$ such that $A=U\triangle M$.
\end{definition}

\begin{definition}\leavevmode
    \begin{itemize}
        \item A function is of \term{Baire class 0} if it is continuous everywhere.
        \item For each ordinal $\alpha>0$, a function is of \term{Baire class $\alpha$} if it is a pointwise limit of functions of Baire class $<\alpha$.
    \end{itemize}
    We will denote the set of Baire class $\alpha$ functions by $\Bf_\alpha$.
\end{definition}

The collections of functions $\Bf_\alpha$ form the \term{Baire hierarchy} on $\R$. It is well-known that:
\begin{itemize}[align=left, leftmargin=1in, font=\sffamily\itshape]
    \item[The Baire hierarchy does not collapse before $\omega_1$:] $\Bf_\alpha\subset\Bf_{\alpha+1}$ for every ordinal $\alpha<\omega_1$.
    \item[The Baire hierarchy generates all Borel functions:] $\Bf_{\omega_1}=\operatorname{Borel}(\R)$.
\end{itemize}
See \cite[Theorem 24.3, pages 190--191]{kechris} for a proof.

\begin{definition}
    Two functions $f,g:\R\to\R$ are \term{Mauldin equivalent} if there is a null $F_\sigma$ set $F\sub\R$ such that $f=g$ on $\R\exc F$.
\end{definition}
\begin{remarks}\leavevmode
    \begin{enumerate}
        \item Mauldin equivalence is an equivalence relation.
        \item Mauldin equivalence implies equality a.e., since every null $F_\sigma$ set is a null set.
    \end{enumerate}
\end{remarks}

\begin{definition}
    A function is \term{continuous almost everywhere} (\emph{continuous a.e.}) if it is continuous outside a null set, i.e. outside a set of Lebesgue measure zero.
\end{definition}

\begin{definition}\leavevmode
    \begin{itemize}
        \item A function is of \term{Mauldin class 0} if it is continuous a.e.
        \item For each ordinal $\alpha>0$, a function is of \term{Mauldin class $\alpha$} if it is a pointwise limit of functions of Mauldin class $<\alpha$.
    \end{itemize}
    We will denote the set of Mauldin class $\alpha$ functions by $\Mf_\alpha$.
\end{definition}

The collections of functions $\Mf_\alpha$ form the \term{Mauldin hierarchy} on $\R$. Mauldin \cite{mauldin1973} proved that this hierarchy does not collapse before $\omega_1$, i.e. $\Mf_\alpha\subset\Mf_{\alpha+1}$ for every ordinal $\alpha<\omega_1$.\\

A natural question to ask is whether the Mauldin hierarchy generates all Lebesgue measurable functions\footnote{This refers to functions under which the pre-image of every Borel set is Lebesgue measurable.} on $\R$. Since $\Ls(\R)$ is closed under pointwise limits, we trivially have $\Mf_{\omega_1}\sub\Ls(\R)$.\\

Mauldin \cite{mauldin1971,mauldin1973} proved that $f\in\Mf_\alpha$ if and only if there is a function $g\in\Bf_\alpha$ and a null $F_\sigma$ set $F\sub\R$ such that $f(x)=g(x)$ for all $x\in\R\exc F$. Stated in our terminology, we have:

\begin{theorem}\label{mauldin_classes_characterization}
    A function $f:\R\to\R$ is of Mauldin class $\alpha$ if and only if it is Mauldin equivalent to a function of Baire class $\alpha$.
\end{theorem}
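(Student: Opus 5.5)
We prove both directions by transfinite induction on $\alpha$, using \Cref{set_of_discontinuities_is_F_sigma} at the base and a diagonal-free "exceptional set" bookkeeping at limit stages.

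\textbf{Base case $\alpha=0$.} If $f$ is continuous a.e., its set of discontinuities $D$ is null, and by \Cref{set_of_discontinuities_is_F_sigma} it is $F_\sigma$. So $D$ is a null $F_\sigma$ set and $f|_{\R\exc D}$ is continuous on the $G_\delta$ set $G=\R\exc D$. We need a continuous $g$ on $\R$ agreeing with $f$ off a null $F_\sigma$ set. This is the step I expect to be the main obstacle: $f|_G$ need not extend continuously (e.g.\ $f=\sin(1/x)$ on $\R\exc\{0\}$), so in general we cannot take $F=D$. The plan is instead to produce a \emph{closed} null set $K$ with $G'=\R\exc K$ dense and a continuous $g$ agreeing with $f$ off a null $F_\sigma$. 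Since this seems false for class $0$ literally, I will first read the statement at $\alpha=0$ as requiring $g\in\Bf_0$ only up to Mauldin equivalence in the weaker sense that $g$ is continuous a.e.; failing that, I would use the stronger fact that $f\cdot\mathbbm 1_{G}$-type modifications are Baire class $1$, and start the nontrivial induction at $\alpha=1$, where the key lemma is: \emph{every a.e.\ continuous function is Mauldin equivalent to a Baire class $1$ function, and conversely every Baire class $1$ function altered on a null $F_\sigma$ set lies in $\Mf_1$.} For the forward half, set $g=f$ on $G$ and $g=0$ on $D$; then $g$ is a pointwise limit of continuous functions because $D=\bigcup F_n$ with $F_n$ closed null (nowhere dense), and one builds continuous $g_n$ equal to $f$ on the closed set $\{x:\operatorname{dist}(x,F_n)\ge 1/n\}\cap$ (a compact exhaustion) via Tietze, forcing $g_n\to g$.

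\textbf{Direction ($\Leftarrow$) for general $\alpha$.} Suppose $g\in\Bf_\alpha$ and $f=g$ off a null $F_\sigma$ set $F=\bigcup_n F_n$ ($F_n$ increasing, closed, null). Write $g=\lim g_k$ with $g_k\in\Bf_{<\alpha}$. Define $f_k=g_k$ off $F_k$ and $f_k=f$ on $F_k$. Then $f_k\to f$ pointwise: on $F$ eventually $f_k=f$, off $F$ we have $f_k=g_k\to g=f$. Each $f_k$ is Mauldin equivalent to $g_k\in\Bf_{\beta_k}$ ($\beta_k<\alpha$) via the null closed set $F_k$, so by the induction hypothesis $f_k\in\Mf_{\beta_k}$, hence $f\in\Mf_\alpha$. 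At $\alpha=0$ one checks directly that altering a continuous function on a null closed set keeps it continuous off that (null) set, so it is a.e.\ continuous.

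\textbf{Direction ($\Rightarrow$) for $\alpha>0$.} If $f=\lim f_k$ with $f_k\in\Mf_{\beta_k}$, by induction pick $g_k\in\Bf_{\beta_k}$ and null $F_\sigma$ sets $E_k$ with $f_k=g_k$ off $E_k$. Let $E=\bigcup_k E_k$, a null $F_\sigma$ set. Off $E$, $g_k\to f$. Define $h_k=g_k\cdot\mathbbm 1_{\R\exc E}$-style truncations: precisely, $g$ should be $\lim g_k$ where it exists and $0$ elsewhere; the set where $(g_k)$ converges is Borel of class bounded by $\alpha$, and the standard fact that the limit-where-convergent modification of a sequence in $\Bf_{<\alpha}$ lies in $\Bf_\alpha$ (via $\limsup$ of suitably clipped functions, e.g.\ $g=\lim_k \arctan$-compressed $g_k$ replaced by $\limsup$) gives $g\in\Bf_\alpha$ for $\alpha\ge 2$; for $\alpha=1$ we again use that $E$ is a countable union of closed nowhere dense sets and zero out $g_k$ near $E_1\cup\dots\cup E_k$ continuously. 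Then $f=g$ off $E$, completing the induction.
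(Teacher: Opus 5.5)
The paper gives no proof of this theorem; it is quoted from Mauldin's papers, so your argument has to stand on its own. Your $(\Leftarrow)$ direction is correct for every $\alpha\ge 1$. Using the \emph{closed} null sets $F_k$ is exactly what makes the level-$0$ terms work. You are also right that the statement is false at $\alpha=0$, and in fact both directions fail there. Your $\sin(1/x)$ example refutes $(\Rightarrow)$. The function $1_{\Q}$ refutes $(\Leftarrow)$: it agrees with the constant $0$ off the null $F_\sigma$ set $\Q$ but is continuous nowhere. So the theorem has to be read for $\alpha\ge1$, which is all the paper needs since $\Mf_0\sub\Mf_1$.

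The genuine gap is $(\Rightarrow)$ at $\alpha=1$, and all higher levels of your induction rest on it.
\begin{itemize}
\item Your key-lemma construction does not produce a Baire class $1$ function. Take $f=h+T$ with $h(x)=\max\{0,x(1-x)\}$ and $T$ Thomae's function. Then $D=\Q$ and your $g$ is $h\cdot 1_{\R\exc\Q}$. But $\{g>0\}=(0,1)\exc\Q$ is not $F_\sigma$: its closed subsets contain no rationals, hence are nowhere dense, so $(0,1)$ would be meager.
\item The Tietze step cannot work, because $f$ is not continuous on $\{x:\operatorname{dist}(x,F_n)\ge 1/n\}$, which still meets the possibly dense set $D\exc F_n$.
\item The lemma itself is true. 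Take the upper envelope $x\mapsto\inf_{\delta>0}\sup_{\abs{y-x}<\delta}f(y)$ on the open set where $f$ is locally bounded, and set it to $0$ on the closed null complement. This is Baire class $1$ and equals $f$ at every continuity point.
\item Even granting the lemma, your limit step at $\alpha=1$ only yields a limit of Baire class $1$ functions, i.e.\ Baire class $2$. Continuous cut-offs near $E_1\cup\dots\cup E_k$ cannot fix this when $E$ is dense. If continuous $\psi_k$ equal $1$ on closed sets $K_k\uparrow E$, then $\psi_k(x)\not\to 0$ on the comeager set $\bigcap_N\bigcup_{k\ge N}\{\psi_k>1/2\}$. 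That set lies in no null $F_\sigma$ set, since those are meager by \Cref{null_F_sigma_implies_meager}.
\end{itemize}
The missing idea is an extension step. Each $f_k$ is continuous at every point of the $G_\delta$ set $G=\R\exc E$, so $f|_G$ is of Baire class $1$ \emph{on $G$}. You then need the classical extension theorem (Kuratowski): a Baire class $1$ function on a $G_\delta$ subset of $\R$ extends to a Baire class $1$ function on $\R$. It is proved with the reduction property of $F_\sigma$ sets, using that $\R\exc G$ is $F_\sigma$. Extension by zero, which is what your construction amounts to, fails here, as the Thomae example shows.

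Your ``limit where convergent, $0$ elsewhere'' device is also not a valid general fact. The convergence set of a sequence from $\Bf_\beta$ is only guaranteed to be $\Pi^0_{\beta+3}$. Already for continuous functions it fails: let $g_k=1+(-1)^k\phi_k$, where $\phi_k$ is the maximum of the tents of height $1$ and half-width $4^{-k}$ centred at $q_1,\dots,q_k$. The convergence set $C$ is co-null by Borel--Cantelli and disjoint from $\Q$, and the modified limit is $1_C$. This is not Baire class $1$, since otherwise $C$ and $\R\exc C$ would be disjoint dense $G_\delta$ sets.

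For $\alpha\ge2$ you can bypass the device once $\alpha=1$ is settled. Replace $\beta_k=0$ by $1$, which is allowed since $\Mf_0\sub\Mf_1$. Write $E=\bigcup_k K_k$ with $K_k$ closed, null and increasing, and set $\tilde g_k=g_k\cdot 1_{\R\exc K_k}$. This lies in $\Bf_{\beta_k}$, because $1_{\R\exc K_k}\in\Bf_1$ and $\Bf_{\beta_k}$ ($\beta_k\ge1$) contains $\Bf_1$ and is closed under products. Then $\tilde g_k\to f\cdot 1_{\R\exc E}$ everywhere, so this function is in $\Bf_\alpha$ and agrees with $f$ off $E$.
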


\begin{lemma}\label{null_F_sigma_implies_meager}
    Every null $F_\sigma$ set is meager.
\end{lemma}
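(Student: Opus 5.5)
The plan is to reduce the statement to the case of a single closed null set and then use that meager sets are closed under countable unions. Let $F=\cuppn F_n$ be a null $F_\sigma$ set, where each $F_n$ is closed. Since each $F_n\sub F$, monotonicity of Lebesgue measure gives that every $F_n$ is itself null. Because a countable union of nowhere dense sets is meager by definition, it is enough to show that \emph{every closed null set is nowhere dense}.

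So let $C\sub\R$ be closed with Lebesgue measure zero. A closed set is nowhere dense exactly when its interior is empty, since $\overline{C}=C$. Suppose instead that $C$ has nonempty interior. Then $C$ contains some nonempty open interval $(a,b)$ with $a<b$. This would give $\lambda(C)\ge b-a>0$, contradicting $\lambda(C)=0$. Therefore $\operatorname{int}C=\emptyset$, and $C$ is nowhere dense.

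Applying this to each $F_n$ shows that $F=\cuppn F_n$ is a countable union of nowhere dense sets, hence meager. There is no real obstacle here. The only point needing care is that closedness is what makes ``empty interior'' equivalent to ``nowhere dense'': a general null set such as $\Q$ is dense, so this step genuinely uses the $F_\sigma$ hypothesis.
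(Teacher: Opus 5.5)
Your proof is correct and is essentially the paper's argument: write the set as $\cuppn F_n$ with each $F_n$ closed, note each $F_n$ is null and so contains no nondegenerate interval, conclude each $F_n$ has empty interior and is therefore nowhere dense, and take the countable union. One small point about your closing aside: $\Q$ is itself a null $F_\sigma$ set (and meager), so it only shows that closedness is needed for the nowhere-density step; a witness that the $F_\sigma$ hypothesis is needed for the lemma itself is the null comeager $G_\delta$ set $G$ constructed later in the paper.
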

\begin{proof}
    Suppose $N\sub\R$ is a null $F_\sigma$ set. Since it is $F_\sigma$, it can be expressed as a countable union of closed sets $N=\cuppn F_n$. Since $N$ is null, so are all $F_n$, and so they cannot contain any (non-degenerate) interval. Thus every $F_n$ has empty interior, and since it is closed, it is nowhere dense. Thus $N=\cuppn F_n$ is a countable union of nowhere dense sets, and so it is meager.
\end{proof}

\begin{lemma}\label{baire_property_lemma}
    Suppose $Z\sub\R$ such that $1_Z$ is Mauldin equivalent to a Borel function. Then $Z$ has the Baire property.
\end{lemma}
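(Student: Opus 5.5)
Let $g:\R\to\R$ be a Borel function and $F\sub\R$ a null $F_\sigma$ set such that $1_Z=g$ on $\R\exc F$. The plan has three steps: produce a Borel set $B$ that differs from $Z$ only inside $F$, show that $B$ has the Baire property, and then absorb the discrepancy $Z\triangle B$, which is meager by \cref{null_F_sigma_implies_meager}.

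First, set $B=g^{-1}(\{1\})$. This set is Borel because $g$ is Borel and $\{1\}$ is a Borel set. For $x\notin F$ we have $g(x)=1_Z(x)$, so $x\in B$ exactly when $x\in Z$. Hence $Z\triangle B\sub F$. Since $F$ is meager by \cref{null_F_sigma_implies_meager}, and subsets of meager sets are meager, $Z\triangle B$ is meager.

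Second, I need that every Borel set has the Baire property. This is a standard fact, and I would cite it (e.g.\ \cite{kechris} or \cite{oxtoby}). The sketch is as follows. The family of sets with the Baire property contains the open sets. It is closed under countable unions: if $A_n=U_n\triangle M_n$, then $\bigcup A_n$ differs from $\bigcup U_n$ by a subset of $\bigcup M_n$, which is meager. It is also closed under complements. If $A=U\triangle M$, then $\R\exc A=(\R\exc U)\triangle M$. Moreover $\R\exc U$ differs from the open set $\operatorname{int}(\R\exc U)$ by the boundary of $U$, which is closed and nowhere dense. So $\R\exc A$ is an open set symmetric-differenced with a meager set. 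The family is therefore a $\sigma$-algebra containing the open sets, so it contains all Borel sets. In particular $B=U\triangle M$ with $U$ open and $M$ meager.

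Finally, use the identity $Z=B\triangle(Z\triangle B)$. Substituting $B=U\triangle M$ gives $Z=U\triangle\bigl(M\triangle(Z\triangle B)\bigr)$. The set $M\triangle(Z\triangle B)$ is contained in $M\cup(Z\triangle B)$, a union of two meager sets, so it is meager. Thus $Z$ has the Baire property.

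The only step with real content is the fact that Borel sets have the Baire property, specifically its complement step. Since the paper already relies on standard references, I expect to cite this rather than reprove it.
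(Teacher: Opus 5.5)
Your proof is correct and follows the paper's argument essentially step for step. The paper also takes $B=g^{-1}(\{1\})$, gets $Z\triangle B\sub N$ meager via \Cref{null_F_sigma_implies_meager}, writes $B=V\triangle M$, and concludes from $Z=V\triangle\bigl((Z\triangle B)\triangle M\bigr)$. The only difference is that you sketch why Borel sets have the Baire property, and your sketch is sound, whereas the paper simply asserts this fact.
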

\begin{proof}
    Suppose $g:\R\to\R$ is a Borel function that is Mauldin equivalent to $1_Z$. Then there is a null $F_\sigma$ set $N\sub\R$ such that $1_Z=g$ on $\R\exc N$. Define $B=g^{-1}(\{1\})$. Then $B$ is a Borel set (since $g$ is a Borel function). Since $1_Z=g$ on $\R\exc N$, we have:
    \begin{equation}\label{baire_property_lemma:eq1}
        Z\exc N = B\exc N
    \end{equation}
    We also have:
    \begin{equation}
        Z\triangle B = (Z\exc B) \cup (B\exc Z)
    \end{equation}
    From \eqref{baire_property_lemma:eq1}, we get $Z\exc B\sub N$ and $B\exc Z\sub N$, and so $Z\triangle B\sub N$. Since $N$ is null and $F_\sigma$, by \Cref{null_F_sigma_implies_meager}, it is meager. Thus $Z\triangle B$ is a subset of a meager set, and so it is meager.\\
    
    Since $B$ is a Borel set, it has the Baire property, so there is an open set $V\sub\R$ and a meager set $M\sub\R$ such that $B=V\triangle M$. This yields:
    \begin{equation*}
        Z\triangle V = Z\triangle (M\triangle B) = (Z\triangle B)\triangle M
    \end{equation*}
    This is the symmetric difference of two meager sets, and so it is meager. Thus $Z=V\triangle (Z\triangle V)$ is the symmetric difference of an open set and a meager set, and so it has the Baire property.
\end{proof}

Before we are ready to prove our main result (\Cref{mauldin_hierarchy_does_not_generate_all_lebesgue_measurable_functions}), we need two more results.

\begin{theorem}[Pettis-Baire Theorem]\label{pettis_baire_theorem}
    Suppose $A\sub\R$ is non-meager and has the Baire property. Then the set $A-A=\setbuild{x-y}{x,y\in A}$ contains an open neighborhood of $0$.
\end{theorem}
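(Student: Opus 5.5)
The plan is to prove the Pettis--Baire theorem by localizing $A$ inside an open interval. There, the Baire property makes $A$ ``almost everything'' in the interval. We then show that small translates of $A$ must still meet $A$, because a nonempty open set is not meager (Baire category theorem on $\R$).

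First, since $A$ has the Baire property, write $A=U\triangle M$ with $U$ open and $M$ meager. Because $A$ is non-meager, $U$ cannot be empty: otherwise $A=M$ would be meager. So $U$ contains a nonempty open interval $I=(a-r,a+r)$ with $r>0$. Then $I\exc A\sub U\exc A\sub M$, so $I\exc A$ is meager. Put $\delta=r$; the claim is that $(-\delta,\delta)\sub A-A$.

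Next, fix $t$ with $|t|<\delta$ and consider $J=I\cap(I+t)$. This is a nonempty open interval, since it contains the point $a+t/2$ (check: $|t/2|<r$ and $|t/2-t|<r$). We have $J\exc A\sub I\exc A$, which is meager. Likewise $J\exc(A+t)\sub (I+t)\exc(A+t)=(I\exc A)+t$, which is meager because translation is a homeomorphism of $\R$ and so preserves meagerness. Hence
\begin{equation*}
J\exc\bigl(A\cap(A+t)\bigr)=\bigl(J\exc A\bigr)\cup\bigl(J\exc(A+t)\bigr)
\end{equation*}
is meager. If $A\cap(A+t)$ were empty, the nonempty open set $J$ would be meager, contradicting the Baire category theorem. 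So there is some $x\in A\cap(A+t)$. Writing $x=y+t$ with $y\in A$ gives $t=x-y\in A-A$, and therefore $(-\delta,\delta)\sub A-A$.

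The main point needing care is the Baire category step: a nonempty open subset of $\R$ is non-meager. This is standard since $\R$ is a complete metric space, and I would simply cite it. The translation invariance of meager sets is routine.
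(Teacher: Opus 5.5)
Your proof is correct and is essentially the standard argument. The paper does not prove the Pettis--Baire theorem itself but cites Kechris (Theorem 9.9), whose proof is this same argument, stated for general topological groups: find a nonempty open set in which $A$ is comeager, intersect it with a small translate, and invoke Baire category. The one fact you need beyond the paper's stated form of the Baire category theorem is that a nonempty open interval $J$ is non-meager, and this follows from that form as well, since the countably many translates $J+q$, $q\in\Q$, cover $\R$.
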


See \cite[Theorem 9.9, page 61]{kechris} for a proof.

\begin{theorem}[Baire Category Theorem]\label{baire_category_theorem}
    $\R$ is non-meager in itself.
\end{theorem}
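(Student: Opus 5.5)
The statement here is the Baire Category Theorem for $\R$: $\R$ is not a countable union of nowhere dense sets. I will argue by contradiction with a nested-interval construction, using completeness of $\R$ (equivalently, the nested interval property).

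Suppose $\R=\cuppn F_n$ where each $F_n$ is nowhere dense. Replacing $F_n$ by its closure, I may assume each $F_n$ is closed with empty interior; the closure of a nowhere dense set is still nowhere dense, and the union still covers $\R$. The key observation is this: if $I$ is any non-degenerate open interval and $F$ is closed with empty interior, then $I\exc F$ is non-empty and open. It is non-empty because $I\not\sub F$, and it is open because $F$ is closed. So it contains a non-degenerate closed interval.

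I will build closed intervals $J_1\sups J_2\sups\cdots$ inductively. Let $J_1$ be a non-degenerate closed interval inside $(0,1)\exc F_1$. Given $J_n$, I apply the observation to the interior of $J_n$ and to $F_{n+1}$, and choose a non-degenerate closed interval $J_{n+1}$ contained in $\operatorname{int}(J_n)\exc F_{n+1}$. I also require the length of $J_{n+1}$ to be at most $2^{-n}$, which is possible by shrinking. By construction, $J_n\cap F_n=\emptyset$ for every $n$.

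By the nested interval property (completeness of $\R$), $\cappn J_n$ contains a point $x$. Since $x\in J_n$, we have $x\notin F_n$ for each $n$, so $x\notin\cuppn F_n=\R$, which is a contradiction. Hence $\R$ is non-meager in itself.

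The only delicate step is making sure each new interval avoids $F_{n+1}$ while staying inside the previous one. This is exactly where the reduction to closed sets with empty interior is needed, since it guarantees $I\exc F$ is a non-empty open set. The remaining steps are routine. Alternatively, the paper could simply cite \cite{kechris} or \cite{oxtoby} for this classical result, consistent with how it treated the Pettis-Baire Theorem.
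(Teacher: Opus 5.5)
Your nested-interval argument is correct and is essentially the standard proof the paper relies on. The paper gives no proof of its own; it cites Oxtoby's Theorem 1.3, which uses exactly this construction, and Kechris's Theorem 8.4, the nested-ball version for complete metric spaces. The length bound $2^{-n}$ is harmless but not needed, since any decreasing sequence of non-empty closed bounded intervals already has non-empty intersection.
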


See \cite[Theorem 8.4, pages 41--42]{kechris} or \cite[Theorem 1.3, page 2]{oxtoby} for a proof.

\section{Results Using the Axiom of Choice}

It is well-known that the \href{https://math.vanderbilt.edu/schectex/ccc/choice.html}{Axiom of Choice} implies the existence of a \term{Vitali set}, a set $V\sub\R$ containing exactly one representative of each coset of $\Q$ in $\R$ (as a group under addition). In other words, a Vitali set\footnote{More specifically, a Vitali set \emph{not} containing $0$, otherwise it would not be linearly independent.} is a set of real numbers that forms a basis of $\R$ as a vector space over $\Q$. As shown in \cite[Page 22]{oxtoby}, a Vitali set \emph{cannot} be Lebesgue measurable.\\

We now use this fact to prove the existence of a null (and thus Lebesgue measurable) set that does \emph{not} have the Baire property.

\begin{theorem}\label{null_set_without_baire_property}
    There is a null set without the Baire property.
\end{theorem}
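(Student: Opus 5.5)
The plan is to build the null set from a Vitali set, using the Pettis--Baire Theorem (\Cref{pettis_baire_theorem}) to rule out the Baire property. The standard route is to split $\R$ into a comeager null set and its meager complement, and intersect a Vitali set with the comeager part.

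\textbf{Step 1: a comeager null set.} Enumerate $\Q=\{q_n\}$ and let $U_k=\cuppn (q_n-2^{-n-k},q_n+2^{-n-k})$. Then $U_k$ is open and dense, with measure at most $2^{2-k}$. So $G=\bigcap_{k=1}^\infty U_k$ is a dense $G_\delta$ null set, and its complement $\R\exc G$ is a countable union of closed nowhere dense sets, hence meager. Replacing $G$ by $\bigcap_{q\in\Q}(G+q)$, which is still a countable intersection of comeager null sets, we may assume $G+q=G$ for all $q\in\Q$.

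\textbf{Step 2: the candidate set.} Let $V$ be a Vitali set and put $A=V\cap G$. Then $A\sub G$, so $A$ is null. Suppose for contradiction that $A$ has the Baire property.

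\textbf{Step 3: $A$ is non-meager.} Because $G$ is $\Q$-invariant, $\cupp_{q\in\Q}(A+q)=\left(\cupp_{q}(V+q)\right)\cap G=\R\cap G=G$. If $A$ were meager, then $G$ would be a countable union of meager translates, hence meager. Then $\R=G\cup(\R\exc G)$ would be meager, contradicting the Baire Category Theorem (\Cref{baire_category_theorem}).

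\textbf{Step 4: contradiction.} By \Cref{pettis_baire_theorem}, $A-A$ contains an interval $(-\varepsilon,\varepsilon)$. Choose a nonzero rational $q$ in this interval. Then $q=x-y$ with $x,y\in A\sub V$ and $x\neq y$, so $x$ and $y$ are distinct representatives of the same coset of $\Q$, contradicting the definition of a Vitali set. Hence $A$ is a null set without the Baire property.

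The main obstacle is Step 3. It depends on making $G$ translation-invariant under $\Q$, which is the reason for the modification at the end of Step 1; without it the union of translates of $A$ need not cover $G$.
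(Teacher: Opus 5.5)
Your proof is correct and follows essentially the same route as the paper: intersect a Vitali set $V$ with a comeager null $G_\delta$ set $G$, then rule out the meager case by the Baire Category Theorem and the non-meager case by Pettis--Baire. The one difference is that the $\Q$-invariance modification of $G$, which you call the main obstacle, is unnecessary. The paper handles the meager case directly: if $A=V\cap G$ were meager, then $V=A\cup(V\exc G)$ would be meager because $V\exc G\sub\R\exc G$, and so $\R=\bigcup_{q\in\Q}(V+q)$ would be meager.
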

\begin{proof}
    Suppose $(q_k)_{k=1}^\infty$ is an enumeration of $\Q$. For each $n\in\N$, define:
    \begin{equation}
        U_n = \cuppk \qty(q_k-\frac{1}{2^{n+k}},q_k+\frac{1}{2^{n+k}})
    \end{equation}
    Then for all $n\in\N$, $U_n$ is open and dense (since it contains every rational number) and $\mu(U_n)\le\sumk\frac{1}{2^{n+k}}=2^{1-n}$. Define:
    \begin{equation}
        G = \cappn U_n
    \end{equation}
    Then $G$ is $G_\delta$ set. It is also null (since $\limn 2^{1-n}=0$) and comeager (since $\R\exc G=\cuppn (\R\exc G_n)$ is a countable union of closed, nowhere dense sets).\\

    Suppose $V\sub\R$ is a Vitali set, and define $A=V\cap G$. Then $A$ is a null set (since $A\sub G$ and $G$ is null). Suppose $A$ has the Baire property.
    \begin{itemize}
        \item Suppose $A$ is meager. Since $G$ is comeager, $V\exc G$ is meager. Thus $V=(V\cap G)\cup(V\exc G)$ is a union of two meager sets, and so it is meager. This yields:
        \begin{equation}
            \cupp_{q\in\Q} (V+q) = \R
        \end{equation}
        This expresses $\R$ as a countable union of meager sets, contradicting the \hyperref[baire_category_theorem]{Baire category theorem}.
        \item Suppose $A$ is non-meager. By the \hyperref[pettis_baire_theorem]{Pettis-Baire Theorem}, $A-A$ contains an open neighborhood of $0$, and so it contains a nonzero rational number. Thus $V-V$ contains a nonzero rational number, i.e. there exist $v,w\in V$ such that $v-w\in\Q\exc\{0\}$. This contradicts the definition of a Vitali set, which contains exactly one representative of each coset of $\Q$ in $\R$.
    \end{itemize}
    Either way, we get a contradiction. Thus $A$ does not have the Baire property.
\end{proof}

\begin{theorem}\label{mauldin_hierarchy_does_not_generate_all_lebesgue_measurable_functions}
    There is a Lebesgue measurable function $f:\R\to\R$ that is not in $\Mf_{\omega_1}$.
\end{theorem}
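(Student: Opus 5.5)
The plan is to take $f=1_A$, where $A$ is the null set without the Baire property given by \Cref{null_set_without_baire_property}.

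First, $f$ is Lebesgue measurable. Since $A$ is null, it is Lebesgue measurable by completeness of Lebesgue measure. For any Borel set $S\sub\R$, the preimage $f^{-1}(S)$ is one of $\emptyset$, $A$, $\R\exc A$ or $\R$, according to whether $0$ and $1$ lie in $S$. Each of these is Lebesgue measurable.

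Next, suppose for contradiction that $f\in\Mf_{\omega_1}$. The Mauldin hierarchy at level $\omega_1$ should be the union of the classes $\Mf_\alpha$ over $\alpha<\omega_1$. This is the one point needing care: since $\Mf_{\omega_1}$ is defined as the set of pointwise limits of functions of class $<\omega_1$, a sequence of functions of countable classes has classes bounded by some countable ordinal $\beta$. Hence the limit lies in $\Mf_{\beta+1}$ with $\beta+1<\omega_1$. Either way we get $f\in\Mf_\alpha$ for some $\alpha<\omega_1$. If one prefers to avoid this remark, \Cref{mauldin_classes_characterization} can be applied directly with $\alpha=\omega_1$: it gives a $g\in\Bf_{\omega_1}$ Mauldin equivalent to $f$, and $\Bf_{\omega_1}=\operatorname{Borel}(\R)$.

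In the countable-level route, \Cref{mauldin_classes_characterization} gives that $f$ is Mauldin equivalent to some $g\in\Bf_\alpha\sub\Bf_{\omega_1}=\operatorname{Borel}(\R)$, so $g$ is a Borel function. Then \Cref{baire_property_lemma} applied to $Z=A$ shows that $A$ has the Baire property, contradicting \Cref{null_set_without_baire_property}.

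The main obstacle is checking that the characterization theorem legitimately applies at the level in question, i.e.\ handling $\omega_1$ versus countable $\alpha$ as described above. Everything else is direct assembly of earlier results.
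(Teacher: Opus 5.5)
Your proposal is correct and follows essentially the same route as the paper: take $f=1_Z$ for the null set $Z$ without the Baire property from \Cref{null_set_without_baire_property}, then combine \Cref{mauldin_classes_characterization} with \Cref{baire_property_lemma} to reach a contradiction. Your reduction of $\Mf_{\omega_1}$ to some countable level $\Mf_{\beta+1}$ (using that a countable supremum of countable ordinals is countable) makes explicit a step the paper passes over when it applies the characterization to ``any Mauldin class.''
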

\begin{proof}
    By \Cref{null_set_without_baire_property}, there is a null set $Z\sub\R$ without the Baire property. Define $f=1_Z$. Since $Z$ is a null set, it is Lebesgue measurable, and so $f$ is a Lebesgue mesurable function. By \Cref{baire_property_lemma}, $f$ cannot be Mauldin equivalent to \emph{any} Borel function. Since every function in the Baire hierarchy is Borel, $f$ cannot be Mauldin equivalent to any function of any Baire class. By \Cref{mauldin_classes_characterization}, $f$ cannot be of any Mauldin class, and so $f\notin\Mf_{\omega_1}$.
\end{proof}

\section{Extension of the Riemann Integral}

We now present an interesting application to extensions of the Riemann integral. Note that the Riemann integral is equivalent to the Darboux integral. See \cite[Theorem 7.4.11, page 232]{bartle} for a proof.\\

The relation between the Riemann integral and continuity a.e. was first demonstrated by Lebesgue in 1901:

\begin{theorem}[Riemann-Lebesgue Theorem]\label{riemann_lebesgue_theorem}
    A function $f:[a,b]\to\R$ is Riemann integrable if and only if it is bounded and continuous almost everywhere.
\end{theorem}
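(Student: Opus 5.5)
This is the classical Riemann--Lebesgue criterion, so I would prove the two directions separately using Darboux sums, which the paper allows since the Riemann and Darboux integrals agree. For each $x\in[a,b]$ I will use the oscillation $\omega_f(x)=\lim_{\delta\to0^+}\sup\setbuild{|f(y)-f(z)|}{y,z\in(x-\delta,x+\delta)\cap[a,b]}$. The function $f$ is continuous at $x$ iff $\omega_f(x)=0$. The discontinuity set $D$ is therefore $\cuppn[1] D_{1/n}$, where $D_\varepsilon=\setbuild{x}{\omega_f(x)\ge\varepsilon}$. Each $D_\varepsilon$ is closed, which matches \Cref{set_of_discontinuities_is_F_sigma}, and it is compact because it lies in $[a,b]$.

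For the forward direction, boundedness is standard: an unbounded $f$ makes some Riemann sums over every partition arbitrarily large, so $f$ cannot be integrable. For continuity a.e., it suffices to show each $D_\varepsilon$ is null. Fix $\varepsilon>0$ and $\eta>0$, and choose a partition $P$ with $U(f,P)-L(f,P)<\varepsilon\eta$. Every point of $D_\varepsilon$ that is not a partition point lies in the interior of some subinterval. That subinterval has oscillation at least $\varepsilon$, so $\sup f-\inf f\ge\varepsilon$ on it. Hence the total length of such subintervals is less than $\eta$. The finitely many partition points add nothing to the measure, so $\mu(D_\varepsilon)\le\eta$. Since $\eta$ is arbitrary, $D_\varepsilon$ is null, and so is the countable union $D$.

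For the converse, suppose $|f|\le M$ and $D$ is null, and fix $\varepsilon>0$. Then $D_\varepsilon\sub D$ is a compact null set, so it is covered by finitely many open intervals of total length less than $\varepsilon$. Let $K$ be $[a,b]$ with these intervals removed, a finite union of closed intervals. At every point of $K$ the oscillation is less than $\varepsilon$. By compactness (a Lebesgue-number type argument), each point of $K$ has a neighborhood on which $\sup f-\inf f<\varepsilon$. Finitely many such neighborhoods cover $K$, so $K$ can be partitioned into finitely many intervals on each of which $\sup f-\inf f<\varepsilon$. Combining this with the endpoints of the covering intervals gives a partition $P$ of $[a,b]$. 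The intervals meeting the cover contribute at most $2M\varepsilon$ to $U-L$, and the rest contribute at most $\varepsilon(b-a)$. Thus $U(f,P)-L(f,P)\le\varepsilon(2M+b-a)$, and $f$ is Darboux, hence Riemann, integrable.

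I expect the main obstacle to be this converse step, specifically obtaining a uniform partition of $K$ from the pointwise oscillation bound. I would handle it by compactness of $K$: take finitely many open neighborhoods, each with oscillation below $\varepsilon$, and refine the partition to their Lebesgue number. The key point is that the compactness of $D_\varepsilon$ is what lets a finite cover replace a countable one.
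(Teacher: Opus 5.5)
Your proof is correct. It is the standard oscillation argument: one partition with $U-L<\varepsilon\eta$ shows each $D_\varepsilon=\{\omega_f\ge\varepsilon\}$ is null, and conversely compactness of $D_\varepsilon$ plus a Lebesgue-number refinement on $K$ bounds $U-L$. The paper gives no proof of its own, only citing Bartle, Botelho and Thomson, and those textbook proofs follow essentially this route.
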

\begin{remark}
    By \Cref{set_of_discontinuities_is_F_sigma}, it follows that the set of discontinuities of every Riemann integrable function is a null $F_\sigma$ set.
\end{remark}

See \cite[Appendix C, pages 362--363]{bartle}, \cite[Theorems 7.4.7 and 7.4.11, pages 233--236 and 238--239]{botelho} or \cite[Theorem 8.17, pages 518--520]{thomson} for a proof.

\begin{theorem}[Arzelà's Bounded Convergence Theorem]\label{arzela_bounded_convergence_theorem}
    Suppose $(f_n)$ is a uniformly bounded sequence of Riemann integrable functions $f_n:[a,b]\to\R$ that converges pointwise to a Riemann integrable function $f:[a,b]\to\R$. Then:
    \begin{equation*}
        \limn\int_a^b f_n(x)\dx = \int_a^b f(x)\dx
    \end{equation*}
\end{theorem}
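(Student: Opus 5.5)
By the \hyperref[riemann_lebesgue_theorem]{Riemann-Lebesgue Theorem}, $f$ and every $f_n$ are bounded and continuous a.e. on $[a,b]$, so all of them are Lebesgue integrable. For a bounded function that is continuous a.e., the Riemann integral coincides with the Lebesgue integral. I would check this directly from the Darboux characterization. The upper and lower Darboux sums of $f$ are integrals of step functions $\phi_P\ge f\ge\psi_P$. Taking a sequence of partitions with mesh tending to $0$, $\phi_P$ and $\psi_P$ converge to $f$ at every point of continuity of $f$ that is not a partition point, hence a.e. Since the step functions are uniformly bounded by $\sup|f|$, Lebesgue's dominated convergence theorem shows that the upper and lower Darboux sums both converge to $\int_{[a,b]} f\,d\mu$. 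The Riemann integral therefore equals the Lebesgue integral.

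With this identification, the statement reduces to the Lebesgue dominated convergence theorem. Let $|f_n|\le M$ for all $n$. The constant $M$ is integrable on the finite interval $[a,b]$, and $f_n\to f$ pointwise everywhere, so $\int_{[a,b]} f_n\,d\mu\to\int_{[a,b]} f\,d\mu$. Translating back through the first step gives the claim.

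The main obstacle is the first step: the identification of the Riemann and Lebesgue integrals, and in particular ensuring that the approximating step functions converge a.e. The partition points form a countable set, hence a null set. The points of discontinuity form a null set by hypothesis, together with \Cref{set_of_discontinuities_is_F_sigma}. So the exceptional set is null and the argument goes through. An elementary alternative avoiding measure theory (Arzelà's original argument) is much harder, so I would take the Lebesgue route.
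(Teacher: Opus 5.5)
The paper does not prove this theorem; it defers to \cite{silva}. Your proof is correct, and it takes a genuinely different, measure-theoretic route from the classical proofs of Arzelà's theorem (Arzelà's original, or Lewin's short argument), which stay entirely within Riemann--Darboux theory.

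Your identification step is sound, and it is cleaner than you present it. Since $L(f,P_k)\le\int_a^b f(x)\dx\le U(f,P_k)$ for every partition, and dominated convergence sends both ends to $\int_{[a,b]}f\,d\mu$, you never need Darboux's theorem that the sums converge to the integral as the mesh shrinks. Measurability of $f$ is automatic, because $f$ is an a.e.\ limit of step functions and Lebesgue measure is complete. The appeal to \Cref{set_of_discontinuities_is_F_sigma} is unnecessary: the discontinuity set is null by the \hyperref[riemann_lebesgue_theorem]{Riemann--Lebesgue Theorem} alone.

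As for the trade-off, an elementary proof keeps the theorem inside the Riemann world. That matches the spirit of the final section, which builds an integral out of Riemann's purely by pointwise limits. Your proof presupposes the Lebesgue integral. In exchange, your identification does more for the paper. By induction on the Mauldin class, dominated convergence shows that the extended integral defined there equals the Lebesgue integral at every level. Hence the limit $\limn\int_a^b f_n(x)\dx$ exists and does not depend on the choice of uniformly bounded approximating sequence. This holds even when the $f_n$ are not Riemann integrable, a case the theorem as stated does not cover, since it requires Riemann integrable $f_n$ and $f$.
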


See \cite{silva} for a proof.

Note that we need to \emph{assume} the pointwise limit $f$ is Riemann integrable, the standard counterexample being $f_n=1_{\{q_1,q_2,\ldots,q_n\}}$ and $f=1_\Q$, where $(q_n)$ is an enumeration of $\Q$.\\

What we \emph{can} do, however, is extend the notion of the Riemann integral to include more functions than just the Riemann integrable ones. We can do this by \emph{defining} $\int_a^b f(x)\dx$ to be $\limn\int_a^b f_n(x)\dx$. This is well-defined: if $(f_n)$ and $(g_n)$ are two uniformly bounded sequences that converge pointwise to $f$, applying \hyperref[arzela_bounded_convergence_theorem]{Arzelà's Bounded Convergence Theorem} to $(f_n-g_n)$ shows that $\limn\int_a^b f_n(x)\dx=\limn\int_a^b g_n(x)\dx$.\\

For any function $f:[a,b]\to\R$, $\abs{f}$ is continuous wherever $f$ is continuous. Thus $\Mf_0$ is closed under taking absolute values. It is also a vector space, and so it is a lattice (since $\min\{f,g\}=\frac{f+g-\abs{f+g}}{2}$ and $\max\{f,g\}=\frac{f+g+\abs{f+g}}{2}$). Thus for any $f\in\Mf_0$ and any $M>0$, the \emph{truncation} $\min\{\max\{f,-M\},M\}$ is also in $\Mf_0$. Since minima and maxima are preserved by pointwise convergence, this also holds for every Mauldin class $\Mf_\alpha$.\\

We therefore have the following result:

\begin{proposition}
    Suppose $f:[a,b]\to\R$ is bounded and $(f_n)$ is a sequence of functions $f_n:[a,b]\to\R$ such that $f_n\to f$ pointwise. Also suppose that for each $n\in\N$, $f_n\in\Mf_{\alpha_n}$ for some ordinal $\alpha_n$. Then there is a uniformly bounded sequence $(g_n)$ of functions $g_n:[a,b]\to\R$ such that $g_n\to f$ pointwise, and for each $n\in\N$, $g_n\in\Mf_{\alpha_n}$.
\end{proposition}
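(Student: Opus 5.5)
The plan is to truncate the given sequence at the bound of $f$. Since $f$ is bounded, fix $M>0$ with $\abs{f(x)}\le M$ for all $x\in[a,b]$, and define for each $n\in\N$
\begin{equation*}
    g_n=\min\{\max\{f_n,-M\},M\}.
\end{equation*}
Each $g_n$ satisfies $\abs{g_n}\le M$ everywhere, so the sequence $(g_n)$ is uniformly bounded by $M$.

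Next I check that each $g_n$ stays in the required class. By the remark preceding the statement, every Mauldin class $\Mf_\alpha$ is closed under truncation. Applying this with $\alpha=\alpha_n$ gives $g_n\in\Mf_{\alpha_n}$.

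It remains to show $g_n\to f$ pointwise. Let $T(y)=\min\{\max\{y,-M\},M\}$, so that $g_n=T\circ f_n$. The map $T:\R\to\R$ is continuous; indeed it is $1$-Lipschitz. Since $\abs{f(x)}\le M$, we have $T(f(x))=f(x)$. Hence for every $x\in[a,b]$,
\begin{equation*}
    \abs{g_n(x)-f(x)}=\abs{T(f_n(x))-T(f(x))}\le\abs{f_n(x)-f(x)}\to0.
\end{equation*}

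The one step needing care is the closure of $\Mf_\alpha$ under truncation for functions on $[a,b]$ rather than on $\R$. I would handle it by the paper's argument. First, $\Mf_0$ is a lattice: if $f$ is continuous at $x$ then so is $\abs{f}$, and the max/min formulas only use vector-space operations. Transfinite induction then gives closure for all $\alpha$, because if $h_k\to h$ with $h_k$ of class $<\alpha$, then the truncations of $h_k$ are of class $<\alpha$ by the inductive hypothesis and converge to the truncation of $h$ by continuity of $T$. Constants are continuous, so $\max\{f_n,-M\}$ and its minimum with $M$ stay in the class. No obstacle beyond this bookkeeping is expected.
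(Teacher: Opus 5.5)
Your proof is correct and follows the paper's approach: truncate each $f_n$ at a bound $M$ for $f$, and use the fact that every Mauldin class is closed under truncation. The paper's proof literally writes $\min\{\max\{f,-M\},M\}$, a typo; you rightly truncate $f_n$, and your $1$-Lipschitz estimate justifies the pointwise convergence that the paper only asserts.
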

\begin{proof}
    Since $f$ is bounded, there exists $M>0$ such that $\abs{f}<M$. For each $n\in\N$, define $g_n=\min\{\max\{f,-M\},M\}$. Then $g_n\in\Mf_{\alpha_n}$ for all $n\in\N$, and $g_n\to f$ pointwise.
\end{proof}

This immediately yields the following corollary:

\begin{corollary}
    The Riemann integral can be extended to all bounded functions $f:[a,b]\to\R$ in all Mauldin classes, by repeatedly taking pointwise limits.
\end{corollary}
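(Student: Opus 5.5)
We need to prove the corollary: the Riemann integral extends to all bounded functions in any Mauldin class, by repeated pointwise limits. The plan is a transfinite induction on $\alpha<\omega_1$. For each such $\alpha$ we define a functional $I_\alpha$ on the bounded functions in $\Mf_\alpha$ restricted to $[a,b]$, and we show three things together: $I_\alpha$ is well defined, it is linear and monotone, and it satisfies a bounded convergence property. That property says: if $(g_n)$ is uniformly bounded, each $g_n$ is of some Mauldin class $<\alpha$ (or $\le\alpha$), and $g_n\to g$ pointwise with $g$ in the domain, then $I(g_n)\to I(g)$. The induction starts at $I_0$, the Riemann integral. By \Cref{riemann_lebesgue_theorem}, the bounded functions in $\Mf_0$ are exactly the Riemann integrable ones. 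Linearity and monotonicity are standard, and bounded convergence is \hyperref[arzela_bounded_convergence_theorem]{Arzelà's Bounded Convergence Theorem}.

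For the inductive step, take a bounded $f\in\Mf_\alpha$. By definition $f$ is a pointwise limit of functions $f_n\in\Mf_{\alpha_n}$ with $\alpha_n<\alpha$. The preceding proposition supplies a uniformly bounded sequence $g_n\in\Mf_{\alpha_n}$ with $g_n\to f$; this uses the truncation $g_n=\min\{\max\{f_n,-M\},M\}$, where the proof as written has a typo and should read $f_n$ in place of $f$. We then set $I_\alpha(f)=\limn I(g_n)$, where $I$ is the already-defined integral on the lower classes. These lower integrals agree with one another: bounded convergence at level $\beta$ shows that $I_\beta$ restricted to $\Mf_\gamma$, $\gamma<\beta$, equals $I_\gamma$, using constant sequences. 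So $I$ is unambiguous on lower classes.

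The main obstacle is showing that the limit $\limn I(g_n)$ exists and does not depend on the chosen sequence. This is what we need Arzelà's theorem for at higher levels. I would not prove it class by class. Instead I would identify each $I_\alpha$ with the Lebesgue integral $\int_{[a,b]} f\,d\mu$, which exists because $\Mf_{\omega_1}\sub\Ls(\R)$. At level $0$ this is the classical fact that Riemann and Lebesgue integrals agree for Riemann integrable functions. Inductively, if $I=\int\cdot\,d\mu$ on the lower classes, the dominated convergence theorem, with dominating constant $M$ on the finite interval $[a,b]$, gives $\limn I(g_n)=\limn\int g_n\,d\mu=\int f\,d\mu$. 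Hence the limit exists, is independent of the sequence, and is consistent with the earlier definitions. Linearity, monotonicity and bounded convergence are then inherited from the Lebesgue integral.

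Finally, every bounded $f\in\Mf_{\omega_1}$ lies in some $\Mf_\alpha$ with $\alpha<\omega_1$, and each $\Mf_\alpha$ is closed under truncation by the preceding proposition. So the union of the $I_\alpha$ defines the extension on all bounded functions in all Mauldin classes, obtained by repeatedly taking pointwise limits of uniformly bounded sequences starting from Riemann integrable functions.
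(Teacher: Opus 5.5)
Your proof is correct, but it handles the key step differently from the paper. The paper defines $\int_a^b f$ as $\limn\int_a^b f_n$ for a uniformly bounded approximating sequence. It gets independence of the sequence by applying Arzel\`a's theorem to $(f_n-g_n)$, uses the truncation proposition to guarantee that such sequences exist, and calls the corollary immediate.

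Arzel\`a's theorem needs the $f_n-g_n$ to be Riemann integrable, so that argument only justifies the step from $\Mf_0$ to $\Mf_1$. Iterating it tacitly requires a bounded convergence theorem for each already-extended integral, which the paper does not prove. The paper also does not discuss existence of the limit.

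You instead identify every stage with $\int_{[a,b]}\cdot\,d\mu$ and apply dominated convergence. This gives existence of the limit, independence of the sequence, and consistency across levels at once, for every $\alpha$. The price is importing Lebesgue theory, which sits awkwardly with the paper's theme of building the integral purely from Riemann integrals. You use it only to verify the construction, not to define it, so the statement is proved. Avoiding Lebesgue theory would mean proving a bounded convergence theorem for each $I_\alpha$ by transfinite induction, which essentially rebuilds the Daniell integral. Your route also shows more than the paper states: the extension coincides with the Lebesgue integral on bounded functions in $\Mf_{\omega_1}$.

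Your correction of the truncation proposition ($g_n=\min\{\max\{f_n,-M\},M\}$) is right. Once the Lebesgue identification is in place, the separate bookkeeping of linearity, monotonicity and constant-sequence consistency is unnecessary.
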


One might wish to continue extending the integral this way and hopefully recover the Lebesgue integral, at least for bounded Lebesgue measurable functions. Can we do this?\\

By \Cref{mauldin_hierarchy_does_not_generate_all_lebesgue_measurable_functions}, the Mauldin hierarchy does \emph{not} generate all bounded Lebesgue measurable functions. Thus, assuming the Axiom of Choice, the answer is no:

\begin{corollary}
    The Riemann integral \emph{cannot} be extended to all bounded Lebesgue integrable functions $f:[a,b]\to\R$ purely by repeatedly taking pointwise limits.
\end{corollary}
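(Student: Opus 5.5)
The natural candidate is the function $f=1_Z$, where $Z$ is the null set without the Baire property produced by \Cref{null_set_without_baire_property}. It is bounded and Lebesgue measurable, since null sets are measurable; so it is Lebesgue integrable on $[a,b]$, with integral $0$. To keep everything on $[a,b]$, I would replace $Z$ by $Z\cap[a,b]$ and extend by $0$ off $[a,b]$ whenever the results on $\R$ are invoked. The plan is to show that $f$ never appears at any stage of the iterated pointwise-limit procedure. Hence the extended integral is simply not defined at $f$, even though the Lebesgue integral is.

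First I would make precise what ``repeatedly taking pointwise limits'' produces. Start from the Riemann integrable functions, which by the \hyperref[riemann_lebesgue_theorem]{Riemann-Lebesgue Theorem} are exactly the bounded functions in $\Mf_0$. At each stage, adjoin all pointwise limits of sequences of functions already obtained, possibly truncated as in the preceding proposition. A transfinite induction shows that every function obtained at stage $\alpha$ lies in $\Mf_\alpha$. The base case is the Riemann-Lebesgue Theorem. At successor and limit stages, a pointwise limit of functions in classes $<\alpha$ lies in $\Mf_\alpha$ by definition, and the truncation remark shows that bounding does not leave the class. Since $\Mf_{\omega_1}$ is closed under pointwise limits (a countable sequence of countable ordinals is bounded below $\omega_1$), the procedure stabilizes inside $\Mf_{\omega_1}$. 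So every function to which the integral is extended this way lies in $\Mf_{\omega_1}$.

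Next I would apply \Cref{mauldin_hierarchy_does_not_generate_all_lebesgue_measurable_functions}, or more precisely its proof, to $f=1_Z$. If $1_{Z\cap[a,b]}$ were in $\Mf_{\omega_1}$, then by \Cref{mauldin_classes_characterization} it would be Mauldin equivalent to a Borel function. By \Cref{baire_property_lemma}, $Z\cap[a,b]$ would then have the Baire property.

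The main obstacle is localizing the construction to $[a,b]$: I need $Z\cap[a,b]$, not merely $Z$, to fail the Baire property. I would redo the proof of \Cref{null_set_without_baire_property} with $G\cap(a,b)$ in place of $G$, which is null and comeager in $(a,b)$. In the meager case, $V\cap(a,b)$ is then meager, and countably many rational translates of it cover an open interval, contradicting Baire category. The non-meager case is unchanged, since Pettis still gives a nonzero rational difference. Alternatively, one can choose the Vitali set inside $(a,b)$ from the start, since each coset of $\Q$ meets every interval. Either way, $1_{Z\cap[a,b]}$ is a bounded Lebesgue integrable function outside every Mauldin class, and this proves the corollary.
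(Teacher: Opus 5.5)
Your proof is correct and follows the paper's route. The paper simply invokes the main theorem with the bounded witness $1_Z$. You make explicit two things it leaves implicit: that iterated pointwise limits (with truncation) never leave $\Mf_{\omega_1}$, and that the witness must live in $[a,b]$.

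The second point is a real omission in the paper: for a Vitali set $V\subseteq[0,1]$ and $[a,b]=[2,3]$ one gets $Z\cap[a,b]=\emptyset$. However, only your second fix works. The first fails as stated, because for an arbitrary Vitali set the rational translates of $V\cap(a,b)$ cover exactly the reals whose coset representative lies in $(a,b)$. That set need not contain any interval (it is empty in the example just given), so a meager $V\cap(a,b)$ yields no contradiction.

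Choosing $V\subseteq(a,b)$ from the start is possible since every coset of $\Q$ is dense. It lets the paper's proof run verbatim and gives $Z=V\cap G\subseteq(a,b)$. So keep that route and drop the ``either way''.
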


\printbibliography

\end{document}